 \renewcommand{\equation}
\newtheorem{prop}{Proposition}[section]
\newtheorem{lem}{Lemma}[section]
\newtheorem{thm}{Theorem}[section]
\newtheorem{fact}{Fact}[section]
\newtheorem{corresp*}{Correspondence}[section]
\newtheorem{facts*}{Facts}[section]
\theoremstyle{definition}
\newtheorem{rmk}{Remark}[section]
\newtheorem{defini}{Definition}[section]
\title{Generalized torsions in once punctured torus bundles}
\author{Nozomu Sekino}
\date{}
\begin{document}
\maketitle

\begin{abstract}
A generalized torsion in a group, an non-trivial element such that some products of its conjugates is the identity. 
This is an obstruction for a group being bi-orderable. 
Though it is known that there is a non bi-orderable group without generalized torsions, it is conjectured that 3-manifold groups without generalized torsions are bi-orderable. 
In this paper, we find generalized torsions in the fundamental groups of once punctured torus bundles which are not bi-orderable. 
Our result contains a generalized torsion in a tunnel number two hyperbolic once punctured torus bundle.
\end{abstract}

\section{Introduction} \label{sec1}
A group $\Gamma$ is called a {\it bi-orderable} group if it admits total order $<$ which is invariant under multiplication to both sides, i.e. if $g<g'$ then $fgh<fg'h$ for every $f,h\in \Gamma$. 
As a convention, the trivial group is regarded as a bi-orderable group. 
There is one obstruction for groups being bi-orderable, called {\it generalized torsions}. 
An non-trivial element $g\in \Gamma$ is called generalized torsion if some product of conjugates of $g$ is the identity of $\Gamma$. 
In general, there exist groups which are not bi-orderable and have no generalized torsions \cite{bludov1}, \cite{bludov2}, \cite{mura}. 
However, it is conjectured that bi-orderability and having no generalized torsions is equivalent for the class of 3-manifold groups \cite{motegi1}. 
We have many affirmative answers for this conjecture, for some geometric manifolds, for some manifolds with non-trivial geometric decompositions, for many link complements and their Dehn fillings \cite{ito1},\cite{ito2},\cite{motegi1},\cite{motegi2},\cite{naylor},\cite{teragaito1},\cite{teragaito2}. 

For the complements of fibered links (not necessarily in $S^3$), the following important sufficient condition and necessary condition (not necessary and sufficient condition) for the bi-orderability are known.:

\begin{thm} \label{realpositive} (A consequence of Theorem 2.7 of \cite{perron})\\
Suppose that the isomorphism on the first homology group of the fiber surface induced by the monodromy of a fibered knot has only real positive eigenvalues. 
Then the fundamental group of the complement of this fibered knot is bi-orderable. 
\end{thm}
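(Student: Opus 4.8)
The plan is to exploit the fibered structure. If $K$ is a fibered knot with fiber surface $F$ and monodromy $\varphi\colon F\to F$, then the complement $M=S^3\setminus K$ is the mapping torus of $\varphi$, so $\pi_1(M)\cong\pi_1(F)\rtimes_{\varphi_\ast}\mathbb{Z}$, sitting in a short exact sequence $1\to\pi_1(F)\to\pi_1(M)\to\mathbb{Z}\to1$. Since $F$ has non-empty boundary, $\pi_1(F)$ is a finitely generated free group. A standard fact about group extensions says that if $\pi_1(F)$ carries a bi-ordering invariant under the conjugation action of $\pi_1(M)$ — equivalently, invariant under $\varphi_\ast$, since $\pi_1(F)$ is normal and $\mathbb{Z}$ acts through $\varphi_\ast$ — then combining that order lexicographically with the standard order on $\mathbb{Z}$ produces a bi-ordering of $\pi_1(M)$. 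So the whole statement reduces to showing that \emph{the hypothesis on the eigenvalues of $\varphi_\ast$ on $H_1(F)$ yields a $\varphi_\ast$-invariant bi-ordering of the free group $\pi_1(F)$}; this reduction, together with the construction below, is precisely the content of Theorem 2.7 of \cite{perron}, which we are invoking.

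To build such an ordering I would use the lower central series $\pi_1(F)=\gamma_1\supset\gamma_2\supset\cdots$, whose successive quotients $\gamma_i/\gamma_{i+1}$ are finitely generated free abelian groups on which $\varphi_\ast$ acts. Because a free group is residually nilpotent, $\bigcap_i\gamma_i=\{1\}$, so declaring $g\neq 1$ positive exactly when its image in $\gamma_i/\gamma_{i+1}$ is positive — where $i$ is maximal with $g\in\gamma_i$ — defines a genuine bi-ordering of $\pi_1(F)$ once bi-orderings are fixed on all quotients (the Magnus-type ordering). The key observation is that $\gamma_i/\gamma_{i+1}$ is, as a $\varphi_\ast$-module, a subquotient of $H_1(F)^{\otimes i}$ (the degree-$i$ part of the free Lie ring on $H_1(F)$); hence if $\varphi_\ast$ has eigenvalues $\lambda_1,\dots,\lambda_n$ on $H_1(F)$, its eigenvalues on $\gamma_i/\gamma_{i+1}$ lie among the products $\lambda_{j_1}\cdots\lambda_{j_i}$, which by hypothesis are again real and positive.

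It then remains to verify a purely linear-algebraic lemma: a matrix $A\in GL_m(\mathbb{Z})$ with only real positive eigenvalues preserves some bi-ordering of $\mathbb{Z}^m$. For this I would triangularize the transpose $A^{\mathsf T}$ over $\mathbb{R}$ — possible precisely because all eigenvalues are real — to get an $A^{\mathsf T}$-invariant complete flag in $(\mathbb{R}^m)^\ast$, choose linear functionals $\ell_1,\dots,\ell_m$ adapted to the flag, and order $\mathbb{Z}^m$ lexicographically via $v\mapsto(\ell_1(v),\dots,\ell_m(v))$. A short computation shows $v>0\Rightarrow Av>0$, because on the $k$-th graded piece $A^{\mathsf T}$ acts by multiplication by the eigenvalue $\lambda_k>0$, so the leading nonzero coordinate keeps its sign. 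Choosing such an order on each $\gamma_i/\gamma_{i+1}$ and assembling as above yields the desired $\varphi_\ast$-invariant bi-ordering of $\pi_1(F)$, and the reduction of the first paragraph completes the argument.

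I expect the main obstacle to be the bookkeeping showing that the Magnus-type ordering assembled from infinitely many lower central quotients is genuinely a bi-ordering and that $\varphi_\ast$-invariance survives the assembly: the eigenvalue propagation through the free Lie ring is clean and the abelian lemma is elementary, so the delicate point is the extension-of-orderings formalism — which is exactly why it is convenient to cite Theorem 2.7 of \cite{perron} rather than reprove it here.
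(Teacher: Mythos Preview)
The paper gives no proof of this statement at all: it is merely recorded as a consequence of Theorem~2.7 of \cite{perron} and then used as a black box. Your proposal is a correct sketch of the Perron--Rolfsen argument that underlies that citation (fibered $\Rightarrow$ semidirect product with free kernel; build a $\varphi_\ast$-invariant bi-order on the free group via the lower central series, using that the induced maps on the abelian quotients inherit positive real eigenvalues; extend lexicographically over $\mathbb{Z}$), so you have supplied strictly more detail than the paper does while following exactly the route the paper points to.
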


\begin{thm} \label{one} (A consequence of Theorem 1.4 of \cite{clay})\\
Suppose that the fundamental group of the complement of a fibered knot is bi-orderable. 
Then the isomorphism on the first homology group of the fiber surface induced by the monodromy of the fibered knot has at least one real positive eigenvalue. 
\end{thm}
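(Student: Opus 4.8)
The plan is to derive Theorem~\ref{one} from two separate inputs: a transfer of bi-orderability from the bundle group down to the fibre group, and a Hölder-type linearization argument carried out inside the fibre group. Let $K$ be a fibered knot with fibre surface $F$ (compact with a single boundary circle, so that $\pi_1(F)$ is free of finite rank) and monodromy $\phi\colon F\to F$; its complement is the mapping torus of $\phi$, so the fundamental group of the complement is $\pi_1(F)\rtimes_{\phi_*}\mathbb Z=\langle\,\pi_1(F),t\mid txt^{-1}=\phi_*(x)\,\rangle$. Assume this group admits a bi-ordering with positive cone $P$. First I would check that $P\cap\pi_1(F)$ is the positive cone of a bi-ordering of $\pi_1(F)$ that is invariant under $\phi_*$: if $x\in\pi_1(F)$ is positive then $\phi_*(x)=txt^{-1}$ is a conjugate of a positive element and hence positive, by bi-invariance. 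This is where Theorem~1.4 of \cite{clay} enters, though it can also be verified directly as above. So the theorem reduces to the following purely algebraic assertion: \emph{if a finitely generated group $N$ carries a bi-ordering invariant under an automorphism $\psi$, then the automorphism $\psi_*$ induced on $H_1(N;\mathbb R)$ has a positive real eigenvalue}; one then applies it to $N=\pi_1(F)$ and $\psi=\phi_*$.

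For this assertion I would argue with convex subgroups. Fix the $\psi$-invariant bi-ordering of $N$ and let $C$ be the union of all proper convex subgroups. Since $N$ is finitely generated and the convex subgroups of a bi-ordered group form a chain under inclusion, $C$ is itself a proper convex subgroup — the largest one — and, the order being bi-invariant, every conjugation is an order-automorphism, so $C$ is normal. Being a group automorphism that preserves the order, $\psi$ carries convex subgroups bijectively to convex subgroups, hence fixes $C$ setwise, so it descends to an automorphism $\bar\psi$ of $N/C$ respecting the quotient order. By construction $N/C$ has no proper nontrivial convex subgroup, i.e.\ it is Archimedean, so by Hölder's theorem $N/C$ is abelian and torsion free and admits an order-preserving embedding $\iota\colon N/C\hookrightarrow(\mathbb R,+)$, unique up to a positive scalar; uniqueness forces $\iota\circ\bar\psi=c\,\iota$ for some real $c>0$. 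Because $N/C$ is abelian, $C\supseteq[N,N]$, so the composite $\ell\colon H_1(N;\mathbb Z)=N^{\mathrm{ab}}\twoheadrightarrow N/C\xrightarrow{\iota}\mathbb R$ is a nonzero homomorphism, and it satisfies $\ell\circ\psi_*=c\,\ell$. Extending $\ell$ by $\mathbb R$-linearity to $H_1(N;\mathbb R)$, its kernel is a $\psi_*$-invariant hyperplane on whose one-dimensional quotient $\psi_*$ acts as multiplication by $c$; hence $X-c$ divides the characteristic polynomial of $\psi_*$, and $c>0$ is a real eigenvalue.

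The step I expect to be the real obstacle is the middle one: one must make sure that the largest proper convex subgroup $C$ genuinely exists (this is exactly where finite generation of $\pi_1(F)$ is used), that it is preserved by $\psi$ and contains the commutator subgroup — so that the Archimedean quotient is a bona fide quotient of $H_1(F;\mathbb Z)$ and the eigenvalue produced on it is an eigenvalue of the monodromy on homology — and that Hölder's theorem may be invoked to linearize $\bar\psi$. The remaining points are bookkeeping; note in particular that the usual ambiguity between $\phi_*$ and $\phi_*^{-1}$ in writing the semidirect product is harmless, since a matrix has a positive real eigenvalue if and only if its inverse does. Finally, as a remark tying this to the applications below: combined with the sufficient condition of Theorem~\ref{realpositive}, this shows that the bi-orderability of a once punctured torus bundle group is governed by the trace of its $SL_2(\mathbb Z)$ monodromy — trace greater than $2$ yields two positive real eigenvalues and hence bi-orderability, while trace less than $-2$ yields no positive real eigenvalue and hence, by Theorem~\ref{one}, non-bi-orderability — and it is among the latter bundles that one should look for generalized torsions.
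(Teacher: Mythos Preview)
The paper does not actually prove Theorem~\ref{one}; it records it as a consequence of Theorem~1.4 of Clay--Rolfsen and uses it as a black box. There is thus no ``paper's own proof'' to compare against. Your argument is essentially a correct reconstruction of the Clay--Rolfsen proof: transfer the bi-ordering to a $\phi_*$-invariant bi-ordering of the free group $\pi_1(F)$, pass to the maximal proper convex subgroup (which exists by finite generation and is normal and $\phi_*$-invariant), linearize the Archimedean quotient via H\"older, and read off the positive eigenvalue from the induced functional on $H_1$.

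Two small remarks. First, your sentence ``This is where Theorem~1.4 of \cite{clay} enters'' is misplaced: Theorem~1.4 of \cite{clay} \emph{is} the eigenvalue statement in full, not merely the transfer step; the transfer is an easy lemma (and you verify it yourself anyway). Second, your closing remark on traces is slightly looser than the paper's: the dichotomy is $\mathrm{tr}(A_\phi)\geq 2$ (both eigenvalues real positive, hence bi-orderable by Theorem~\ref{realpositive}) versus $\mathrm{tr}(A_\phi)<2$ (no real positive eigenvalue, since for $|\mathrm{tr}|<2$ the eigenvalues are non-real and for $\mathrm{tr}\leq -2$ they are real negative; hence not bi-orderable by Theorem~\ref{one}). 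Your phrasing mentions only the extreme cases $\mathrm{tr}>2$ and $\mathrm{tr}<-2$, which undersells the conclusion. Neither point affects the correctness of the proof itself.
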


In this paper, we consider {\it once punctured torus bundles}, surface-bundles over circles each of whose fiber is torus with connected boundary. 
In this case, the conditions above are necessary and sufficient condition for the bi-orderability. 
We will show that if the fundamental group of a once punctured torus bundle is not bi-orderable then it admits a generalized torsion. 
This adds the class of once punctured torus bundles to the list of 3-manifolds where the conjecture holds. 

The rest of this paper is organized as follows. 
In Section~\ref{sec2}, we represent the fundamental groups as oriented based paths and the conjugation classes of the fundamental groups as  oriented free loops. 
We review the relations between theses and some operations. 
In Section~\ref{sec3}, we review that generalized torsions in the fundamental group of fiber-bundles over circles are in that of the fibers and satisfy some relation in terms of monodromies. 
In Section~\ref{sec4}, we assign words and cyclic words to oriented based paths and oriented free loops in once punctured torus, respectively. 
After these preparations, we prove the main result in Section~\ref{sec5}. 
Thanks to Theorem~\ref{realpositive} and Theorem~\ref{one}, we know when the fundamental group of a once punctured torus bundle is not bi-ordearble. 
We will find a generalized torsion in this case. 
At last, as a remark, we show that our result contains a generalized torsion in the fundamental group of a tunnel number two hyperbolic once punctured torus bundle.

In this paper, we do not distinguish the homotopy classes of paths or loops with their representatives. The words in groups are read from left to right. For a group $\Gamma$, $C(\Gamma)$ denotes the set of conjugacy classes of $\Gamma$.

\section*{Acknowledgements}
The author would like to thank professor Motegi for introducing the area of generalized torsions to him, and giving him many helpful comments. 

\section{Based paths and free loops} \label{sec2}
For a topological space $X$ and a base point $*\in X$, the fundamental group $\pi_{1}(X,*)$ is defined as the set of the homotopy types of oriented based paths $([0,1],\{0,1\})\longrightarrow (X,*)$ with multiplications as concatenations. 
Let $F: \pi_{1}(X,*)\longrightarrow C\left(\pi_{1}(X,*)\right)$ be a map which sends an element to its conjugacy class. 
Topologically, $C\left(\pi_{1}(X,*)\right)$ is regarded as the set of the homotopy types of oriented free loops $S^{1}\longrightarrow X$ and $F$ as the operation which identifies the endpoints of paths and forgets the identified special point. 

\begin{defini}
For an oriented free loop $l$ and a point $p$ on $l$, $l\{p\}$ denotes the oriented path starting at $p$, following $l$ and ending at $p$. 
Moreover, for another point $q$ on $l$, which is not $p$, $l\{p,q\}$ denotes the subpath of $l$ whose initial endpoint is $p$ and terminal point is $q$. 
Note that $l\{p,q\}$ and $l\{q,p\}$ are different. 
\end{defini}

\begin{defini}
For an oriented path or an oriented free loop $I$, $\bar{I}$ denotes the oriented path or the oriented free loop obtained by reversing the orientation of $I$.
\end{defini}

\begin{defini}
For two oriented paths $I_{1}$ and $I_{2}$ such that the terminal point of $I_{1}$ and the initial point of $I_{2}$ are identical, $I_{1}-I_{2}$ denotes the concatenation of $I_{1}$ and $I_{2}$.
\end{defini}

\begin{defini}
Let $l_1, l_2 \in C\left(\pi_{1}(X,*)\right)$ be two free loops, and $\alpha$ an oriented path in $X$ whose initial endpoint $\alpha(0)$ is on $l_1$ and terminal endpoint $\alpha(1)$ is on $l_2$. 
Then $l\left(l_1,\alpha,l_2\right)$ denotes a new free loop obtained by forgetting the endpoints of $l_{1}\{\alpha(0)\}-\alpha-l_{2}\{\alpha(1)\}-\bar{\alpha}$. 
We call $l\left(l_1,\alpha,l_2\right)$ {\it the loop obtained by connecting} $l_1$ {\it and} $l_2$ {\it using} $\alpha$.
\end{defini}

\begin{lem} \label{connection}
Let $l_1, l_2 \in C\left(\pi_{1}(X,*)\right)$ be two free loops, and $\alpha$ an oriented path in $X$ whose initial endpoint $\alpha(0)$ is on $l_1$ and terminal endpoint $\alpha(1)$ is on $l_2$. 
Let $I_{1},I_{2} \in \pi_{1}(X,*)$ be elements satisfying $F(I_{i})=l_{i}$ for $i=1,2$. 
Then there exists an element $g\in \pi_{1}(X,*)$ such that $F(I_{1}g I_{2}g^{-1})=l\left(l_1,\alpha,l_2\right)$.
\end{lem}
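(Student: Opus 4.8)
The plan is to work directly with the topological description of $\pi_1(X,*)$ and $C(\pi_1(X,*))$ given above: elements of $\pi_1(X,*)$ are based paths up to homotopy, conjugacy classes are free loops, and $F$ forgets the basepoint. First I would fix a basepoint $*$ and represent $l_1$ by a loop passing through $\alpha(0)$ and $l_2$ by a loop passing through $\alpha(1)$. Choose an auxiliary path $\beta$ from $*$ to $\alpha(0)$. Then $I_1' := \beta - l_1\{\alpha(0)\} - \bar\beta$ is a based loop at $*$ representing $l_1$, hence $F(I_1') = l_1$; since $I_1$ also satisfies $F(I_1) = l_1$, the elements $I_1$ and $I_1'$ are conjugate, say $I_1 = h_1 I_1' h_1^{-1}$ for some $h_1 \in \pi_1(X,*)$. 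Similarly pick a path $\gamma$ from $*$ to $\alpha(1)$, set $I_2' := \gamma - l_2\{\alpha(1)\} - \bar\gamma$, and write $I_2 = h_2 I_2' h_2^{-1}$.

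Next I would compute a based-loop representative of $l(l_1,\alpha,l_2)$. By definition this free loop is obtained by forgetting the endpoints of $l_1\{\alpha(0)\} - \alpha - l_2\{\alpha(1)\} - \bar\alpha$; conjugating by $\beta$ (equivalently, inserting $\bar\beta - \beta$ at the start and $\bar\beta-\beta$ appropriately) shows this free loop is also represented by the based loop
\[
J := \beta - l_1\{\alpha(0)\} - \bar\beta \;-\; (\beta - \alpha - \bar\gamma) \;-\; \gamma - l_2\{\alpha(1)\} - \bar\gamma \;-\; (\gamma - \bar\alpha - \bar\beta).
\]
Writing $\delta := \beta - \alpha - \bar\gamma \in \pi_1(X,*)$, this reads $J = I_1' \,\delta\, I_2' \,\delta^{-1}$, so $F(I_1'\,\delta\,I_2'\,\delta^{-1}) = l(l_1,\alpha,l_2)$.

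Finally I would reconcile this with the given $I_1, I_2$. Since $I_1' = h_1^{-1} I_1 h_1$ and $I_2' = h_2^{-1} I_2 h_2$, we get
\[
l(l_1,\alpha,l_2) = F\big(h_1^{-1} I_1 h_1 \,\delta\, h_2^{-1} I_2 h_2 \,\delta^{-1}\big) = F\big(I_1\, (h_1 \delta h_2^{-1})\, I_2\, (h_1 \delta h_2^{-1})^{-1}\big),
\]
using that $F$ is conjugation-invariant (conjugating the whole word by $h_1^{-1}$). Thus $g := h_1 \delta h_2^{-1} = h_1(\beta - \alpha - \bar\gamma)h_2^{-1}$ works. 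The only real subtlety is bookkeeping with the auxiliary paths $\beta,\gamma$ and making sure the concatenation $J$ genuinely represents the free loop in the definition — i.e. that conjugating a free-loop representative by $\beta$ does not change the free homotopy class, which is immediate since $F$ kills conjugation; there is no serious obstacle, just care with orientations and the order of concatenation (recalling $l\{q,p\} \neq l\{p,q\}$ and that words are read left to right).
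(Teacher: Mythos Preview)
Your proof is correct and follows essentially the same approach as the paper: both arguments choose auxiliary paths from the basepoint to the loops, write out the concatenation representing $l(l_1,\alpha,l_2)$, and cancel. The only cosmetic difference is that the paper picks paths $\beta_i$ landing at arbitrary points of $l_i$ but tailored so that $\beta_i - l_i\{\beta_i(1)\} - \bar\beta_i = I_i$ on the nose (so no extra conjugators $h_i$ are needed, at the cost of $g$ containing subarcs of the $l_i$), whereas you land your paths directly at $\alpha(0),\alpha(1)$ (making the computation of $J$ cleaner) and then absorb the discrepancy with the given $I_i$ into the conjugators $h_i$; the resulting $g$'s agree.
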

\begin{proof}
Note that there exist oriented paths $\beta_{i}$ starting from $*$ and ending on $l_{i}$, denoted by $\beta_{i}(1)$, such that $\beta_{i} - l_{i}\{\beta_{i}(1)\}-\bar{\beta_{i}} =I_{i}$ for $i=1,2$ (under homotopies). 
Let $g\in \pi_{1}(X,*)$ be $\bar{\beta_{1}}-\bar{l_{1}}\{\beta_{1}(1), \alpha(0)\}-\alpha-\overline{l_{2}\{\beta_{2}(1),\alpha(1)\}}-\bar{\beta_{2}}$. 
Then we have $I_{1}g I_{2}g^{-1}=\beta_{1}-l\left(l_1,\alpha,l_2\right)\{\beta_{1}(1)\}-\bar{\beta_{1}}$ and $F(I_{1}g I_{2}g^{-1})=l\left(l_1,\alpha,l_2\right)$. See Figure~\ref{conjugation}.

\begin{figure}[htbp]
 \begin{center}
  \includegraphics[width=60mm]{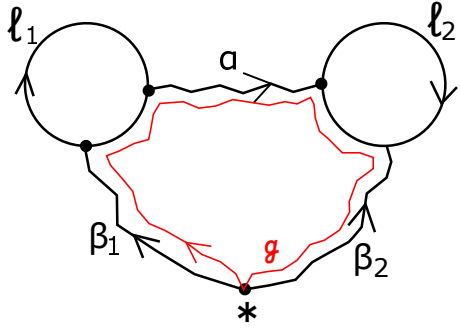}
 \end{center}
 \caption{A schematic of the based path $g$.}
 \label{conjugation}
\end{figure}

\end{proof}

\section{Generalized torsions in a fibered space} \label{sec3}
In this section, we show that generalized torsions in the fundamental group of a fiber bundle over a circle are same as the elements in the fundamental group of the fiber space satisfying some relations (Proposition~\ref{gentorinfib}).

\begin{defini}(Generalized torsion)\\
Let $\Gamma$ be a group. A non-trivial element $g$ is called a {\it generalized torsion} if there exist positive integer $n$ and elements $w_{1},\dots, w_{n}\in \Gamma$ such that $(w_{1}g{w_{1}}^{-1})\cdots (w_{n}g{w_{n}^{-1}})=1_{\Gamma}$, the identity of $\Gamma$.
\end{defini}

Let $(X,*)$ be a based topological space and $\phi$ a self-homeomorphism of $X$ fixing $*$. 
Let $(Y,*_{Y})$ be a topological space obtained from $X\times [0,1]$ by identifying $(x, 1)$ and $(\phi(x),0)$ for $x\in X$, and $*_{Y}$ comes from $(*,0)$. 
In this situation, $(Y,*_Y)$ is called the {\it $(X,*)$-bundle over a circle using $\phi$} and $\phi$ is called {\it monodromy}. 
Let $\tau \in \pi_{1}(Y,*_{Y})$ be the based path coming from $[0,1]\longrightarrow X\times [0,1]; t\longmapsto (*,t)$ and let $T$ be a subgroup of $\pi_{1}(Y,*_{Y})$ generated by $\tau$. 
Then it is known that $T$ is isomorphic to $\mathbb{Z}$, and that $\pi_{1}(X,*)$ injects in $\pi_{1}(Y,*_{Y})$. 
Under these notation, $\pi_{1}(Y,*_{Y})$ is isomorphic to $\pi_{1}(X,*)\rtimes T$, where the multiplication is defined as $(g_1,\tau ^{n_1})\cdot (g_2, \tau ^{n_2})=(g_{1}\tilde{\phi}^{n_1}(g_{2}), \tau ^{n_{1}+n_{2}})$, where $\tilde{\phi}$ is the induced map between $\pi_{1}(X,*)$ by $\phi$. 
Abusing notations, $(g,1_{T})$ for $g\in \pi_{1}(X,*)$ and $(1_{\pi_{1}(X,*)},\tau^{n})$ are written as $g$ and $\tau^{n}$, respectively. 
Note that for $g\in \pi_{1}(X,*)$, we have $\tau g \tau^{-1}=\tilde{\phi}(g)$ and $\tau^{-1} g \tau=\tilde{\phi}^{-1}(g)$ in $\pi_{1}(Y,*_{Y})$.\\

Suppose that $g\in \pi_{1}(Y,*_{Y})$ is a generalized torsion. 
Take positive integer $n$ and $w'_{1},\dots,w'_{n}\in \pi_{1}(Y,*_{Y})$ such that $(w'_{1}g{w'_{1}}^{-1})\cdots (w'_{n}g{w'_{n}}^{-1})=1_{\pi_{1}(Y,*_{Y})}$. 
By considering the projection $\pi_{1}(Y,*_{Y})\longrightarrow T$, we have $g\in \pi_{1}(X,*)$. 
Represent $w'_{i}$ as $g_{i_{1}}\tau^{n_{i_{1}}}\cdots g_{i_{k}}\tau^{n_{i_{k}}}$, where $g_{i_{j}}\in \pi_{1}(X,*)$ and $n_{i_{j}}\in \mathbb{Z}$ for $j=1,\dots k$. 
Then $(w'_{i}g{w'_i}^{-1})= \left( \prod^{k}_{j=1}\tilde{\phi}^{\left(\sum^{j}_{l=1} n_{i_l} \right)}(g_{i_j})\right) \tilde{\phi}^{\left( \sum^{k}_{l=1}n_{i_l} \right)}(g)  \left( \prod^{k}_{j=1}\tilde{\phi}^{\left(\sum^{j}_{l=1} n_{i_l} \right)}(g_{i_j})\right)^{-1}$ in $\pi_{1}(Y,*_{Y})$. 
Set $N_{i}$ to be $\sum^{k}_{l=1}n_{i_l}$ and $w_{i}\in \pi_{1}(X,*)$ to be $\left( \prod^{k}_{j=1}\tilde{\phi}^{\left(\sum^{j}_{l=1} n_{i_l} \right)}(g_{i_j})\right)$. 
Then we have an equation $\left(w_{1}\tilde{\phi}^{N_1}(g){w_{1}}^{-1}\right)\cdots \left(w_{n}\tilde{\phi}^{N_n}(g){w_{n}}^{-1}\right)=1_{\pi_{1}(Y,*_{Y})}$ in $\pi_{1}(Y,*_Y)$. 
Since the left-hand side is a composition of elements in $\pi_{1}(X,*)$ and $\pi_{1}(X,*)$ injects in $\pi_{1}(Y,*_Y)$, we have an equation $\left(w_{1}\tilde{\phi}^{N_1}(g){w_{1}}^{-1}\right)\cdots \left(w_{n}\tilde{\phi}^{N_n}(g){w_{n}}^{-1}\right)=1_{\pi_{1}(X,*)}$ in $\pi_{1}(X,*)$. 

Conversely, suppose that there exist $g\in \pi_{1}(X,*)$, positive integer $n$, integers $N_{1},\dots,N_{n}$ and elements $w_{1}\dots ,w_{n}\in \pi_{1}(X,*)$ such that $\left(w_{1}\tilde{\phi}^{N_1}(g){w_{1}}^{-1}\right)\cdots \left(w_{n}\tilde{\phi}^{N_n}(g){w_{n}}^{-1}\right)=1_{\pi_{1}(X,*)}$ holds in $\pi_{1}(X,*)$. 
Then $\left(w_{1}\tau^{N_1}g\tau^{-N_{1}}{w_{1}}^{-1}\right)\cdots \left(w_{n}\tau^{N_n}g\tau^{-N_n}{w_{n}}^{-1}\right)=1_{\pi_{1}(Y,*_{Y})}$ holds in $\pi_{1}(Y,*_{Y})$. 
This implies that $g$ is a generalized torsion.\\\\

We state the above argument as a Proposition:
\begin{prop} \label{gentorinfib}
Let $(Y,*_{Y})$ be the $(X,*)$-bundle over a circle using $\phi$ as the monodromy. 
An element $g\in \pi_{1}(Y,*_Y)$ is a generalized torsion if and only if $g\in \pi_{1}(X,*)$ and there exist positive integer $n$, integers $N_{1},\dots , N_{n}$ and elements $w_{1},\dots,w_{n}\in \pi_{1}(X,*)$ such that $\left(w_{1}\tilde{\phi}^{N_1}(g){w_{1}}^{-1}\right)\cdots \left(w_{n}\tilde{\phi}^{N_n}(g){w_{n}}^{-1}\right)=1_{\pi_{1}(X,*)}$ holds in $\pi_{1}(X,*)$. 
\end{prop}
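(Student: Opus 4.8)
The plan is to exploit the semidirect product description $\pi_{1}(Y,*_{Y}) \cong \pi_{1}(X,*)\rtimes T$ recalled above, where $T=\langle \tau\rangle\cong\mathbb{Z}$, together with the two structural facts that $\pi_{1}(X,*)$ injects into $\pi_{1}(Y,*_{Y})$ and that conjugation by $\tau^{m}$ realizes $\tilde{\phi}^{m}$ on $\pi_{1}(X,*)$. Once these are in hand, both implications reduce to bookkeeping in the normal form $g_{1}\tau^{n_{1}}\cdots g_{k}\tau^{n_{k}}$ for elements of $\pi_{1}(Y,*_{Y})$.

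For the forward implication, I would start from a generalized torsion relation $(w'_{1}g{w'_{1}}^{-1})\cdots(w'_{n}g{w'_{n}}^{-1})=1$ in $\pi_{1}(Y,*_{Y})$ and first locate $g$ in the fiber group. Applying the projection $\pi_{1}(Y,*_{Y})\longrightarrow T\cong\mathbb{Z}$, which is a homomorphism killing all conjugations, one gets $n$ times the image of $g$ equal to $0$, hence $g\in\pi_{1}(X,*)$. Next, writing each $w'_{i}$ in normal form $g_{i_{1}}\tau^{n_{i_{1}}}\cdots g_{i_{k}}\tau^{n_{i_{k}}}$ and sliding the $\tau$-powers past the $\pi_{1}(X,*)$-letters using $\tau^{m}h\tau^{-m}=\tilde{\phi}^{m}(h)$, I would rewrite $w'_{i}g{w'_{i}}^{-1}$ in the shape $w_{i}\tilde{\phi}^{N_{i}}(g)w_{i}^{-1}$, where $N_{i}=\sum_{l}n_{i_{l}}$ is the total $\tau$-exponent of $w'_{i}$ and $w_{i}=\prod_{j=1}^{k}\tilde{\phi}^{(\sum_{l=1}^{j}n_{i_{l}})}(g_{i_{j}})\in\pi_{1}(X,*)$. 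Substituting these back yields a relation all of whose factors lie in $\pi_{1}(X,*)$; since $\pi_{1}(X,*)$ injects into $\pi_{1}(Y,*_{Y})$, the same relation holds in $\pi_{1}(X,*)$, and $g$ remains nontrivial there because it is nontrivial in $\pi_{1}(Y,*_{Y})$.

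For the converse, given $(w_{1}\tilde{\phi}^{N_{1}}(g)w_{1}^{-1})\cdots(w_{n}\tilde{\phi}^{N_{n}}(g)w_{n}^{-1})=1$ in $\pi_{1}(X,*)$ with $w_{i},g\in\pi_{1}(X,*)$ and $g$ nontrivial, I would substitute $\tilde{\phi}^{N_{i}}(g)=\tau^{N_{i}}g\tau^{-N_{i}}$ and read the resulting identity inside $\pi_{1}(Y,*_{Y})$: it exhibits the elements $(w_{i}\tau^{N_{i}})\,g\,(w_{i}\tau^{N_{i}})^{-1}$ as the conjugates whose product is the identity, so $g$ is a generalized torsion in $\pi_{1}(Y,*_{Y})$.

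The only delicate point is the normal-form manipulation in the forward direction: one must check that pushing all the $\tau$'s to one side of $w'_{i}g{w'_{i}}^{-1}$ really collects into the stated product $w_{i}$ with exponents given by the partial sums $\sum_{l=1}^{j}n_{i_{l}}$, and that the trailing $\tau$-powers coming from $w'_{i}$ and ${w'_{i}}^{-1}$ cancel, leaving precisely $\tilde{\phi}^{N_{i}}(g)$ in the middle. This is a routine induction on the word length $k$ using the multiplication rule $(g_{1},\tau^{n_{1}})\cdot(g_{2},\tau^{n_{2}})=(g_{1}\tilde{\phi}^{n_{1}}(g_{2}),\tau^{n_{1}+n_{2}})$, and it is exactly the computation carried out just before the statement; I do not expect any genuine obstacle beyond careful indexing.
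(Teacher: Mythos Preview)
Your proposal is correct and follows exactly the paper's own argument: project to $T$ to force $g\in\pi_{1}(X,*)$, rewrite each conjugator in normal form to obtain $w'_{i}g{w'_{i}}^{-1}=w_{i}\tilde{\phi}^{N_{i}}(g)w_{i}^{-1}$ with $N_{i}$ the total $\tau$-exponent, and use the injection $\pi_{1}(X,*)\hookrightarrow\pi_{1}(Y,*_{Y})$; the converse is the same substitution $\tilde{\phi}^{N_{i}}(g)=\tau^{N_{i}}g\tau^{-N_{i}}$. Indeed, as you note, this is precisely the computation displayed in the text immediately preceding the proposition, so there is nothing to add.
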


\section{Paths and loops in a once punctured torus} \label{sec4}
Let $(\Sigma, *)$ be a torus with connected boundary with a base point on the boundary. 
In this section, we review the way to represent oriented based paths, elements of $\pi_{1}(\Sigma,*)$, and oriented free loops, elements of $C\left( \pi_{1}(\Sigma,*) \right)$. 
Take two disjoint properly embedded oriented arcs $I_x$ and $I_y$ such that they are disjoint from $*$ and cut $\Sigma$ into a disk as in Figure~\ref{arcsystem}. 

\begin{figure}[htbp]
 \begin{center}
  \includegraphics[width=30mm]{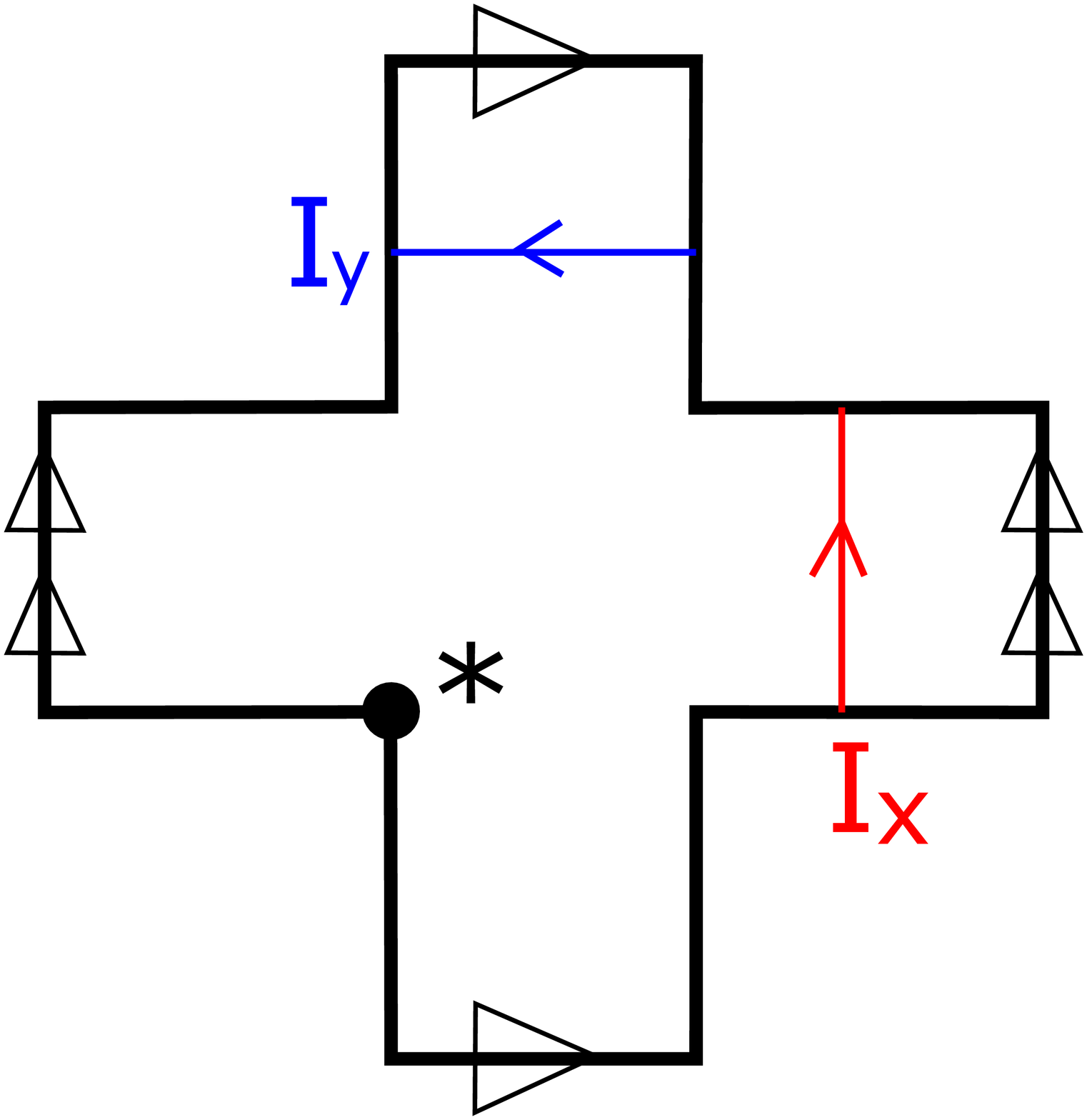}
 \end{center}
 \caption{Oriented arcs $I_x$, $I_y$ on $\Sigma$.}
 \label{arcsystem}
\end{figure}

\subsection{Based paths on $\Sigma$} \label{subsec1}
For an oriented based path $I\in \pi_{1}(\Sigma, *)$, we give a word $w(I)$, which is an element of the free group $G$ of rank $2$ generated by the alphabets $\{x,y\}$, as follows:
Take the empty word at first, and follow $I$. 
If $I$ hits $I_x$ from the left (or right) of $I_x$, then we add $x$ (or $x^{-1}$, respectively) to the right of the word we have, 
and if $I$ hits $I_y$ from the left (or right) of $I_y$, then we add $y$ (or $y^{-1}$, respectively) to the right of the word we have. 
Repeat this operation until we reach the terminal point of $I$. 
Note that this $w(\cdot)$ gives the isomorphism between $\pi_{1}(\Sigma, *)$ and $G$. 

\subsection{Free loops on $\Sigma$} \label{subsec2}

For an oriented free loop $l\in C\left( \pi_{1}(\Sigma, *)\right)$, we give a cyclic word $W(l)$, which is an element of $C(G')$ the set of the conjugacy classes of the free group $G'$ of rank $2$ generated by the alphabets $\{X,Y\}$, as follows: 
Take the empty word at first, and take a point $*_l$ on $l$ and follow $l\{*_{l}\}$. 
If $l\{*_{l}\}$ hits $I_x$ from the left (or right) of $I_x$, then we add $X$ (or $X^{-1}$, respectively) to the right of the word we have, 
and if $l\{*_{l}\}$ hits $I_y$ from the left (or right) of $I_y$, then we add $Y$ (or $Y^{-1}$, respectively) to the right of the word we have. 
Repeat this operation until we reach the terminal point of $I$. 
Then connect the last of the word to the first of the word so that we get a cyclic word. 
Note that this $W(\cdot)$ gives the isomorphism as sets between $C\left( \pi_{1}(\Sigma, *) \right)$ and $C(G')$. 
Note also that for an oriented based path $I\in \pi_{1}(\Sigma,*)$, $W\left( F(I) \right)$ is obtained from $w(I)$ by changing $x^{\pm1}$ and $y^{\pm1}$ to $X^{\pm1}$ and $Y^{\pm1}$ respectively, and connecting the last and the first of the word. 
Due to space limitation, we represent cyclic words in lines, and they should be regarded as cyclic. 

We give oriented simple loops on $\Sigma$ names as follows, where ``simple'' means having no self-intersections. 
Take two oriented simple loops $\alpha$ and $\beta$ as in Figure~\ref{alphabeta}. 

\begin{figure}[htbp]
 \begin{center}
  \includegraphics[width=35mm]{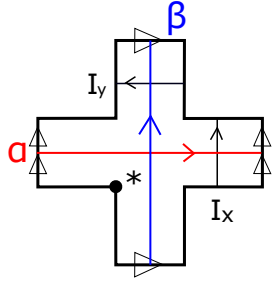}
 \end{center}
 \caption{Loops $\alpha$ and $\beta$.}
 \label{alphabeta}
\end{figure}

\begin{defini}
Let $p$ and $q$ are relative prime integers. 
Write $|p|$ parallel (disjoint) $\alpha$ loops and give them the same orientation as $\alpha$ if $p\geq 0$, and the opposite orientations if $p<0$. 
And write $|q|$ parallel (disjoint) $\beta$ loops and give them the same orientation as $\beta$ if $q\geq 0$, and the opposite orientations if $q<0$. 
Then resolve the intersection points so that the orientations are compatible. 
Since $p$ and $q$ are relative prime, we get an oriented simple loop. 
We call this loop $L(p,q)$. See Figure~\ref{loop_example} for example. 
It is well-known that every oriented simple free loop on $\Sigma$ is homotopic to $L(p,q)$ for the unique relative prime integers $p$ and $q$.
\end{defini}

\begin{figure}[htbp]
 \begin{center}
  \includegraphics[width=100mm]{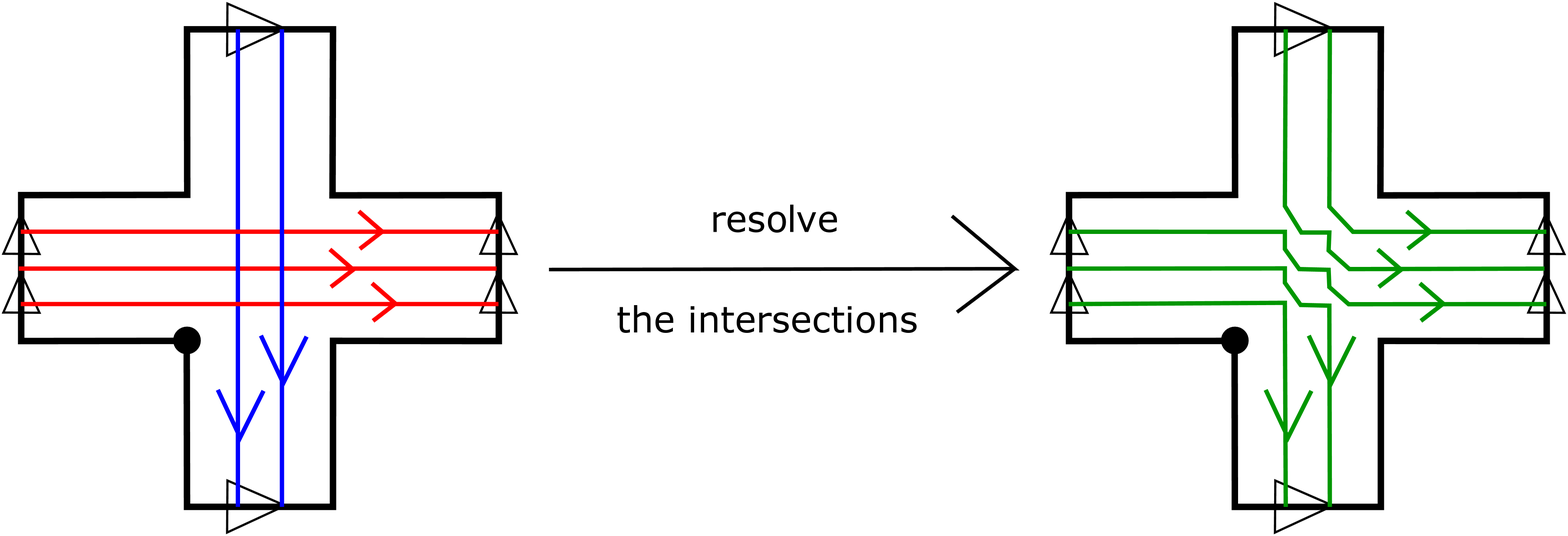}
 \end{center}
 \caption{L(3,-2)}
 \label{loop_example}
\end{figure}

\section{Once punctured torus bundles} \label{sec5}
Let $(\Sigma, *)$ be a torus with connected boundary with a base point on the boundary, and take the same arcs $I_x$ and $I_y$ as in Figure~\ref{arcsystem} and the same loops $\alpha$ and $\beta$ as in Figure~\ref{alphabeta}.  
Let $(M_{\phi},*_{M_{\phi}})$ be the $\Sigma$-bundle over a circle with orientation preserving and boundary fixing monodromy $\phi$. 
The 3-manifolds admitting such structures are called {\it once punctured torus bundles}, (OPTB for short). 
The representation matrix under the basis $\{ [\alpha], [\beta]\}$, which are the homology classes of $\alpha$ and $\beta$, of the induced isomorphism of $H_{1}(\Sigma;\mathbb{Z})$ by $\phi$ is denoted by $A_{\phi} \in SL_{2}(\mathbb{Z})$. 
Then the following Fact are known:
\begin{fact} \label{monodromy} (Lemma 3 of \cite{morimoto})\\
Two $\Sigma$-bundles over a circle $(M_{\phi},*_{M_{\phi}})$ and $(M_{\psi},*_{M_{\psi}})$ are homeomorphic (possibly reversing the orientation) if and only if $A_{\phi}$ is conjugate to $A_{\psi}$ or $A^{-1}_{\psi}$ in $GL_{2}(\mathbb{Z})$.
\end{fact}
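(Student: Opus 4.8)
The plan is to reduce the statement to a question about the mapping class group of $\Sigma$ and then appeal to uniqueness of the fibration. Write $\mathrm{MCG}^{\pm}(\Sigma)$ for the group of isotopy classes of self-homeomorphisms of $\Sigma$ (allowing orientation reversal, and allowing the boundary to rotate). The structural input special to genus one is that the action on $H_{1}(\Sigma;\mathbb{Z})\cong\mathbb{Z}^{2}$ gives a \emph{faithful} homomorphism $\mathrm{MCG}^{\pm}(\Sigma)\to GL_{2}(\mathbb{Z})$ which is onto, the orientation-preserving classes mapping isomorphically onto $SL_{2}(\mathbb{Z})$. The only discrepancy with the boundary-fixing convention, the Dehn twist along $\partial\Sigma$, becomes trivial after capping the puncture and does not change the homeomorphism type of $M_{\phi}$, so a boundary-fixing monodromy is recorded without loss by its class $A_{\phi}$; under the isomorphism above, conjugating a monodromy corresponds to conjugating $A_{\phi}$ in $GL_{2}(\mathbb{Z})$ and inverting it corresponds to inverting $A_{\phi}$.

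For the ``if'' direction, assume $A_{\phi}=PA_{\psi}^{\varepsilon}P^{-1}$ with $P\in GL_{2}(\mathbb{Z})$ and $\varepsilon\in\{\pm1\}$. Realize $P$ by a homeomorphism $h$ of $\Sigma$, orientation-preserving exactly when $\det P=1$. If $\varepsilon=1$, then $h\psi h^{-1}$ and $\phi$ act identically on $H_{1}(\Sigma)$, hence are isotopic by faithfulness, and conjugate monodromies have homeomorphic mapping tori, giving $M_{\psi}\cong M_{\phi}$ (the homeomorphism reversing orientation iff $\det P=-1$). If $\varepsilon=-1$, first observe that reversing the orientation of the base circle identifies $M_{\psi}$ with the mapping torus of $\psi^{-1}$, whose matrix is $A_{\psi}^{-1}$, and then apply the previous case.

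For the ``only if'' direction, suppose $f\colon M_{\phi}\to M_{\psi}$ is a homeomorphism (possibly orientation-reversing); the crux is to isotope $f$ to a fiber-preserving map, after which restricting to a fiber and recording whether $f$ reverses the direction of the base circle and whether it preserves orientation yields a $GL_{2}(\mathbb{Z})$-conjugacy between $A_{\phi}$ and $A_{\psi}^{\pm1}$. When $\mathrm{tr}(A_{\phi})\neq2$ the Wang sequence gives $H_{1}(M_{\phi};\mathbb{Z})\cong\mathbb{Z}\oplus\bigl(\mathbb{Z}^{2}/(A_{\phi}-I)\mathbb{Z}^{2}\bigr)$ with $\det(A_{\phi}-I)=2-\mathrm{tr}(A_{\phi})\neq0$, so $b_{1}(M_{\phi})=1$ and $H^{1}(M_{\phi};\mathbb{Z})\cong\mathbb{Z}$ has a unique generator up to sign, necessarily the class of the fibration. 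The key input is then that, by Thurston's theory of the norm together with Stallings' fibration theorem, a fiber is up to isotopy the unique norm-minimizing surface in its homology class, and norm, fiberedness and homology class are homeomorphism invariants; hence $f$ carries the fiber of $M_{\phi}$ to a surface isotopic to the fiber of $M_{\psi}$, and after an isotopy $f$ is level-preserving on the complement $\Sigma\times[0,1]$ of a fiber, that is, fiber-preserving.

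The remaining case $\mathrm{tr}(A_{\phi})=2$ is the main obstacle, since then $b_{1}(M_{\phi})\ge2$, the fibration need not be unique, and the argument above breaks down; it must be handled by hand. Here $A_{\phi}$ is conjugate in $SL_{2}(\mathbb{Z})$ to $\left(\begin{smallmatrix}1&n\\0&1\end{smallmatrix}\right)$ for some $n\ge0$, so $M_{\phi}$ is a graph manifold (the suspension of the curve fixed by the monodromy is an essential torus, or $M_{\phi}=\Sigma\times S^{1}$ when $n=0$); one checks directly that $\left(\begin{smallmatrix}1&n\\0&1\end{smallmatrix}\right)$ and $\left(\begin{smallmatrix}1&m\\0&1\end{smallmatrix}\right)$ are conjugate up to inversion in $GL_{2}(\mathbb{Z})$ iff $|n|=|m|$, while $|n|$ is recovered from $H_{1}(M_{\phi};\mathbb{Z})$. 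Combining the two cases gives the equivalence.
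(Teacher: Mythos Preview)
The paper does not prove this statement at all: it is quoted as Fact~\ref{monodromy} with a citation to Lemma~3 of \cite{morimoto}, and no argument is supplied. So there is nothing to compare your proof against within the paper itself; you have produced a self-contained argument where the author was content to cite the literature.

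That said, your argument is essentially sound. The ``if'' direction is standard and correct. For the ``only if'' direction your case split on $\mathrm{tr}(A_{\phi})$ is the right one: when $\mathrm{tr}\neq 2$ you correctly compute $b_{1}=1$ via the Wang sequence, and invoking the Thurston norm to pin down the fiber up to isotopy is legitimate (if somewhat heavy for a genus-one statement). The passage from ``$f$ carries fiber to fiber'' to a conjugacy of monodromies deserves one more line: after cutting along fibers you get a homeomorphism $h\colon\Sigma\times[0,1]\to\Sigma\times[0,1]$ whose two boundary restrictions $h_{0},h_{1}$ are isotopic, and the compatibility with the gluings forces $\psi$ isotopic to $h_{0}\phi^{\pm1}h_{0}^{-1}$, the sign recording whether $f$ flips the circle direction. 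In the $\mathrm{tr}=2$ case your homology computation is correct; you should also note (as you implicitly use) that $b_{1}(M_{\phi})\geq 2$ forces $\mathrm{tr}(A_{\psi})=2$ as well, so both monodromies are shears and the comparison via $|n|$ applies. Finally, your remark that the boundary Dehn twist does not affect $M_{\phi}$ is what makes the $H_{1}$-action faithful at the level of bundles, and is worth stating more explicitly: $T_{\partial\Sigma}$ is isotopic to the identity through homeomorphisms not fixing $\partial\Sigma$ pointwise, so composing $\phi$ with it yields a homeomorphic bundle.
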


Since the eigenvalues of $A\in SL_{2}(\mathbb{Z})$ is the roots of $x^{2}-(trA)x+1=0$, Theorem~\ref{realpositive} implies that $\pi_{1}(M_{\phi},*_{M_{\phi}})$ is bi-orderable (and thus admits no generalized torsions) if $trA_{\phi}\geq 2$. 
Moreover, Theorem~\ref{one} implies that $\pi_{1}(M_{\phi},*_{M_{\phi}})$ is not bi-orderable if $trA_{\phi}<2$. 
We want to show that $\pi_{1}(M_{\phi},*_{M_{\phi}})$ admits a generalized torsion if $trA_{\phi}<2$. 

Under Fact~\ref{monodromy}, we assume that representing matrix $A_{\phi}$ is of the form of the following:

\begin{fact}(Lemma 4 of \cite{morimoto})\\
Every element of $SL_{2}(\mathbb{Z})$ is conjugate in $GL_{2}(\mathbb{Z})$ to the matrix $A$ such that $A_{1,1}\cdot A_{2,2}\geq 0$ and $|A_{1,1}|\geq |A_{2,2}|$, where $A_{i,j}$ is the $(i,j)$-entry of $A$.
\end{fact}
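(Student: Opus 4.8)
The plan is to produce the normal form by minimizing a size function over the $GL_{2}(\mathbb{Z})$-conjugacy class of $A$, using only conjugations by the elementary matrices $S=\begin{pmatrix}0&1\\1&0\end{pmatrix}$, $U_n=\begin{pmatrix}1&n\\0&1\end{pmatrix}$ and $L_m=\begin{pmatrix}1&0\\m&1\end{pmatrix}$, all of which lie in $GL_{2}(\mathbb{Z})$. Writing $A=\begin{pmatrix}a&b\\c&d\end{pmatrix}\in SL_{2}(\mathbb{Z})$, I would first record the elementary effects: conjugating by $S$ exchanges $a\leftrightarrow d$ and $b\leftrightarrow c$; conjugating by $U_n$ replaces the diagonal $(a,d)$ by $(a+nc,\ d-nc)$ while leaving $c$ fixed; conjugating by $L_m$ replaces $(a,d)$ by $(a-mb,\ d+mb)$ while leaving $b$ fixed. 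In each case the trace $t=a+d$ is preserved.

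Among all matrices conjugate to $A$ in $GL_{2}(\mathbb{Z})$ I would then choose one, again written $\begin{pmatrix}a&b\\c&d\end{pmatrix}$, for which the non-negative integer $|a|+|d|$ is minimal; conjugating by $S$ if necessary (which does not change $|a|+|d|$) I may also assume $|a|\ge|d|$. It then suffices to show that such a representative automatically satisfies $ad\ge 0$. Suppose instead $ad<0$; then $a$ and $d$ are nonzero of opposite sign, so $|a|\ge|d|\ge 1$, and $ad-bc=1$ forces $bc=ad-1\le-2$, hence $b,c\ne 0$ and $|bc|=1+|a||d|$.

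The heart of the matter is that this configuration cannot be minimal. Setting $u=a+nc$ one has $|a+nc|+|d-nc|=|u|+|t-u|=|t|+2\,\mathrm{dist}(u,J)$, where $J$ is the closed interval with endpoints $0$ and $t$; since $|a|\ge|d|$ the point $a$ lies outside $J$ at distance exactly $|d|$, so the present value $|a|+|d|$ equals $|t|+2|d|$, while the set of $u$ giving a strictly smaller value is the open interval obtained by enlarging $J$ by $|d|$ on each side --- an interval of length $|t|+2|d|=|a|+|d|$ having $a$ as one of its endpoints. Consequently some $U_n$-shear strictly decreases $|a|+|d|$ unless the spacing $|c|$ of the progression $\{a+nc\}$ is at least $|a|+|d|$; symmetrically, some $L_m$-shear decreases it unless $|b|\ge|a|+|d|$.

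If neither shear helps we obtain $|bc|\ge(|a|+|d|)^{2}$, contradicting $|bc|=1+|a||d|<(|a|+|d|)^{2}$, the last inequality holding because $|a|^{2}+|a||d|+|d|^{2}\ge 3>1$. Hence a minimizer of $|a|+|d|$, after the initial swap, satisfies both $|a|\ge|d|$ and $ad\ge 0$, which is the assertion. I expect the one delicate point to be the distance identity $|u|+|t-u|=|t|+2\,\mathrm{dist}(u,J)$ together with the ensuing claim that a shift by a vector of length strictly less than $|a|+|d|$ must land strictly inside the improving interval; everything else is the bookkeeping above, and in particular this route requires no separate analysis of the finitely many conjugacy classes with $|\mathrm{tr}\,A|\le 2$.
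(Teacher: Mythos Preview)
The paper does not prove this statement; it is quoted without argument as Lemma~4 of \cite{morimoto}, so there is no ``paper's own proof'' to compare against. Your proposal is a correct self-contained proof. The bookkeeping on the conjugations by $S$, $U_n$, $L_m$ is right, and the key step---that at a minimizer of $|a|+|d|$ with $ad<0$ the improving open interval for the function $u\mapsto|u|+|t-u|$ is exactly the segment with endpoints $a$ and $d$, of length $|a|+|d|$, with $a$ on its boundary---is accurately identified. From this it follows that a single step $n=\pm1$ (chosen to move $a$ toward $d$) already lands strictly inside whenever $0<|c|<|a|+|d|$, and symmetrically for $|b|$; hence at a minimizer one would need $|bc|\ge(|a|+|d|)^{2}$, contradicting $|bc|=1+|a||d|$ since $|a|,|d|\ge1$. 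The argument is complete and, as you note, uniform in the trace.
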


Hereafter, we assume that $A_{\phi}=\left( \begin{array}{cc}
     a   &  b  \\
      c   &  d \\
  \end{array} \right) $ with $ad\geq 0$ and $|a|\geq |d|$. 
Note that $\alpha \left(=L(1,0)\right)$ is mapped to $L(a,c)$ by $\phi$. 

\begin{rmk}
For the case of closed torus bundles over circles, the homeomorphism types are classified by the conjugation classes of matrices similarly. 
Though they are not the complements of fibered knots, almost same arguments for Theorem~\ref{realpositive} and Theorem~\ref{one} imply that a closed torus bundle over a circle has bi-orderable fundamental group if and only if the trace of the monodromy matrix is grater than or equal to $2$. 
In the other cases, we can easily find generalized torsions since the fundamental group of the torus is abelian: 
\begin{itemize}
\item When $a+d=1$, we assume that $A_{\phi}=\left( \begin{array}{cc}
     1   &  -1  \\
      1   &  0 \\
  \end{array} \right) \left(=\left( \begin{array}{cc}
     1   &  0  \\
      0   &  -1 \\
  \end{array} \right) \left( \begin{array}{cc}
     1   &  1  \\
      -1   &  0 \\
  \end{array} \right) \left( \begin{array}{cc}
     1   &  0  \\
      0   &  -1 \\
  \end{array} \right) \right) $. Then ${A_{\phi}}^{3}+E=O$, where $E$ and $O$ are the identity matrix and zero-matrix, and thus every element of the fundamental group of the closed torus is a generalized torsion.
\item When $a+d\leq 0$, we have ${A_{\phi}}^{2}+(-a-d)A_{\phi}+E=O$ by the Cayley-Hamilton's equation. 
Thus every element of the fundamental group of the closed torus is a generalized torsion.
\end{itemize}
However, in the case of once punctured torus bundles, we should look more carefully since $\pi_{1}(\Sigma,*)$ is not abelian.
\end{rmk}

\subsection{The case where $trA_{\phi}=1$} \label{subsec5_1}
We assume that $A_{\phi}=\left( \begin{array}{cc}
     1   &  -1  \\
      1   &  0 \\
  \end{array} \right)$. 
Then $\alpha \left(=L(1,0)\right)$ is mapped to $L(-1,0)$, $\alpha$ with the opposite orientation, by $\phi^{3}$. 
We can connect $\alpha$ and $\phi^{3}(\alpha)$ by an arc $\gamma$ as in Figure~\ref{trace1}. 
Note that $W\left(l\left(\alpha, \gamma, \phi^{3}(\alpha)\right)\right)$ is the empty word. 
Fix an oriented based path $I_{\alpha}\in \pi_{1}(\Sigma,*)$ such that $F(I_{\alpha})=\alpha$. 
Then $W\left(\phi^{3}(I_{\alpha})\right)=\phi^{3}(\alpha)$ also holds. 
By Lemma~\ref{connection}, there exists an element $g\in \pi_{1}(\Sigma,*)$ such that $W\left(F\left(I_{\alpha}g\phi^{3}(I_{\alpha})g^{-1}\right)\right)$ is the empty word in $C(G')$, and this implies that $w\left(I_{\alpha}g\phi^{3}(I_{\alpha})g^{-1}\right)$ is the empty word in $G$. 
Thus $I_{\alpha}g\phi^{3}(I_{\alpha})g^{-1}=1_{\pi_{1}(\Sigma,*)}$ holds, and we see that $I_{\alpha}$ is a generalized torsion.

\begin{figure}[htbp]
 \begin{center}
  \includegraphics[width=100mm]{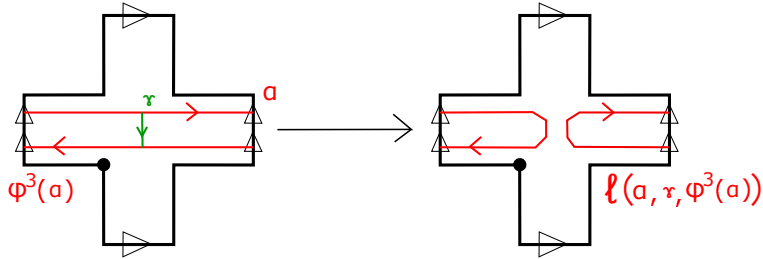}
 \end{center}
 \caption{$\gamma$ and $l\left(\alpha, \gamma, \phi^{3}(\alpha)\right)$.}
 \label{trace1}
\end{figure}

\subsection{The case where $trA_{\phi}\leq 0$}
In this case, $\alpha \left(=L(1,0)\right)$ is mapped to $L(a,c)$ by $\phi$ and to $L(d,-c)$ by $\phi^{-1}$. Note that $a\leq d\leq0$ since $ad\geq0$ and $|a|\geq|d|$. 
If $a=0$, then $\phi^{2}$ reverses the orientation of $\alpha$. Thus we get a generalized torsion as in Subsection~\ref{subsec5_1}. In the following, we assume that $a<0$. 
$W\left( L(a,c)\right)$ consists of $|a|$-times $X^{-1}$ and $|c|$-times $Y^{sign(c)}$,
 $W(\alpha)$ is $X$, and $W\left( L(d,-c)\right)$ consists of $|d|$-times $X^{-1}$ and $|c|$-times $Y^{-sign(c)}$, where $sign(c)$ is $+1$ if $c\geq0$ and $-1$ otherwise. 
Along $L(a,c)$, take $|a|$ points $*_{1},\dots,*_{|a|}$ just after intersection points with $I_{x}$. 
Take $|a|$-copies $\alpha_{1},\dots, \alpha_{|a|}$ of $\alpha\left(=L(1,0)\right)$ and arcs $\gamma_{1},\dots,\gamma_{|a|}$ which are disjoint from $I_x$ and $I_y$ so that $\gamma_{i}$ connects $*_i$ and $\gamma_{i}$ for each $i=1,\dots ,|a|$. See Figure~\ref{connect}. 
Fix an oriented based path $I_{\alpha}\in \pi_{1}(\Sigma,*)$ such that $F(I_{\alpha})=\alpha$. 
By using Lemma~\ref{connection} repeatedly, we see that there exist elements $g_{1},\dots,g_{|a|}\in \pi_{1}(\Sigma,*)$ such that $W\left( F\left( \phi(I_{\alpha})\cdot(g_{1}I_{\alpha}{g_{1}}^{-1}))\cdot \cdots \cdot (g_{|a|}I_{\alpha}{g_{|a|}}^{-1}) \right)\right)=Y^{c}$.

\begin{figure}[htbp]
 \begin{center}
  \includegraphics[width=100mm]{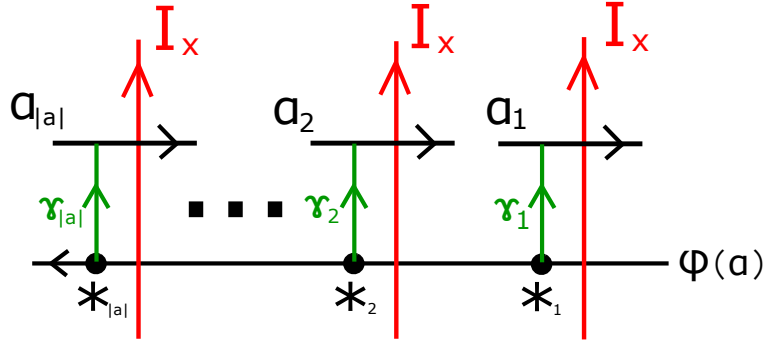}
 \end{center}
 \caption{$*_{1},\dots,*_{|a|}$ and $\alpha_{1},\dots,\alpha_{|a|}$ and $\gamma_{1},\dots,\gamma_{|a|}$.}
 \label{connect}
\end{figure}

If $d=0$, $W\left(\phi^{-1}(\alpha)\right)=Y^{-c}$. 
Then by using Lemma~\ref{connection}, choosing connecting arc which is disjoint from $I_x$ and $I_y$, we see that there exists an element $g\in \pi_{1}(\Sigma,*)$ such that \\$W\left( F\left( \phi(I_{\alpha})\cdot(g_{1}I_{\alpha}{g_{1}}^{-1}))\cdot \cdots \cdot (g_{|a|}I_{\alpha}{g_{|a|}}^{-1}) (g\phi^{-1}(I_{\alpha})g^{-1})\right)\right)$ is the empty word in $C(G')$,
 and this implies that $w\left(\left( \phi(I_{\alpha})\cdot(g_{1}I_{\alpha}{g_{1}}^{-1}))\cdot \cdots \cdot (g_{|a|}I_{\alpha}{g_{|a|}}^{-1}) (g\phi^{-1}(I_{\alpha})g^{-1})\right)\right)$ is the empty word in $G$. 
Thus $\left( \phi(I_{\alpha})\cdot(g_{1}I_{\alpha}{g_{1}}^{-1}))\cdot \cdots \cdot (g_{|a|}I_{\alpha}{g_{|a|}}^{-1}) (g\phi^{-1}(I_{\alpha})g^{-1})\right)=1_{\pi_{1}(\Sigma,*)}$ holds, and we see that $I_{\alpha}$ is a generalized torsion.\\
If $d<0$, like $\phi(\alpha)$, there exist elements $g'_{1},\dots,g'_{|d|}\in \pi_{1}(\Sigma,*)$ such that \\$W\left( F\left( \phi^{-1}(I_{\alpha})\cdot(g'_{1}I_{\alpha}{g'_{1}}^{-1}))\cdot \cdots \cdot (g'_{|d|}I_{\alpha}{g'_{|d|}}^{-1}) \right)\right)=Y^{-c}$. 
Then by Lemma~\ref{connection}, choosing connecting arc which is disjoint from $I_x$ and $I_y$, we see that there exists an element $\tilde{g}\in \pi_{1}(\Sigma,*)$ such that \\
$W\left( F\left( \phi(I_{\alpha})\cdot(g_{1}I_{\alpha}{g_{1}}^{-1})\cdot \cdots \cdot (g_{|a|}I_{\alpha}{g_{|a|}}^{-1})\cdot \tilde{g} \left(\phi^{-1}(I_{\alpha})\cdot(g'_{1}I_{\alpha}{g'_{1}}^{-1})\cdot \cdots \cdot (g'_{|d|}I_{\alpha}{g'_{|d|}}^{-1})\right) \tilde{g}^{-1}\right)\right)$ is the empty word in $C(G')$,
 and this implies that \\
$w\left( \phi(I_{\alpha})\cdot(g_{1}I_{\alpha}{g_{1}}^{-1})\cdot \cdots \cdot (g_{|a|}I_{\alpha}{g_{|a|}}^{-1})\cdot \tilde{g} \left(\phi^{-1}(I_{\alpha})\cdot(g'_{1}I_{\alpha}{g'_{1}}^{-1})\cdot \cdots \cdot (g'_{|d|}I_{\alpha}{g'_{|d|}}^{-1})\right) \tilde{g}^{-1}\right)$ is the empty word in $G$. \\
Thus $\phi(I_{\alpha})\cdot(g_{1}I_{\alpha}{g_{1}}^{-1})\cdot \cdots \cdot (g_{|a|}I_{\alpha}{g_{|a|}}^{-1})\cdot \tilde{g} \left(\phi^{-1}(I_{\alpha})\cdot(g'_{1}I_{\alpha}{g'_{1}}^{-1})\cdot \cdots \cdot (g'_{|d|}I_{\alpha}{g'_{|d|}}^{-1})\right) \tilde{g}^{-1}=1_{\pi_{1}(\Sigma,*)}$ holds, and we see that $I_{\alpha}$ is a generalized torsion.

\begin{rmk} (This remark is suggested by professor Motegi)\\
In general, the tunnel number of every once punctured torus bundle is less than or equal to $2$, where the tunnel number is the minimal number of properly embedded arcs such that the complement is the handlebody. 
We can choose arcs $I_x$ and $I_y$ in one fiber for the complement being a handlebody, for example. 
In \cite{baker}, tunnel number one once punctured torus bundles are completely determined. 
They said that the tunnel number of a once punctured torus bundle is one if and only if there is a simple free loop on a fiber such that this loop and the image of this loop under the monodromy intersect once. 
The monodromy matrix of such a once punctured torus bundle is $\left( \begin{array}{cc}
     m   &  1  \\
      -1   &  0 \\
  \end{array} \right)$ for some $m\in \mathbb{Z}$ under appropriate basis. 
In our result, we can find a tunnel number two once punctured torus bundle whose fundamental group has a generalized torsion: 
For example, consider a once punctured torus bundle whose monodromy matrix is $A=\left( \begin{array}{cc}
     4n-1   &  -2n  \\
     2   &  -1 \\
  \end{array} \right)$ for some $n\leq0$. This has a generalized torsion since the trace is $4n-2\leq 1$. 
Suppose for a contradiction that the tunnel number of this is one. 
Then $A$ is conjugate in $GL_{2}(\mathbb{Z})$ to $B=\left( \begin{array}{cc}
     4n-2   &  1  \\
      -1   &  0 \\
  \end{array} \right)$ or $B^{-1}$. 
Then modulo $2$, every element conjugate to $A$ in $GL_{2}(\mathbb{Z})$ is congruent to $\left( \begin{array}{cc}
     1   &  0  \\
      0  & 1\\
  \end{array} \right)$. 
However, $B$ and $B^{-1}$ are congruent to $\left( \begin{array}{cc}
     0   &  1  \\
      1  & 0\\
  \end{array} \right)$ modulo $2$, this leads a contradiction. 

Moreover, if we suppose $n\leq -1$ in addition, then the monodromy is pseudo-Anosov (not reducible and not periodic). 
Then by \cite{otal} \cite{thurston}, corresponding once punctured torus bundle admits a complete hyperbolic structure of finite volume.

\end{rmk}

\vspace{0.5cm}

\ GRADUATE SCHOOL OF MATHEMATICAL SCIENCES, THE UNIVERSITY OF TOKYO, 3-8-1 KOMABA, MEGURO--KU, TOKYO, 153-8914, JAPAN\\
\ \ E-mail address: \texttt{sekinonozomu@g.ecc.u-tokyo.ac.jp}
\end{document}